\newif\ifpdf
\newtheorem{lemma}{Lemma}[]
\newtheorem{theo}{Theorem}[]
\newtheorem{corollary}[lemma]{Corollary}
\newtheorem{proposition}[lemma]{Proposition}
\def\X{\mathcal{X}}
\def\vphi{\varphi}
\def\cl{{\rm cl}}
\def\hs{\hat{s}}
\def\ha{\hat{\alpha}}
\def\N{\mathbb{N}}
\begin{document}

\title[]{Finite homogeneous geometries}

\author{David M. Evans}

\address{%
Department of Mathematics\\
Imperial College London\\
London SW7~2AZ\\
UK.}

\email{david.evans@imperial.ac.uk}

\date{18 January 2017}

\begin{abstract}  This paper reproduces the text of a part of the Author's DPhil thesis. It gives a proof of the classification of non-trivial, finite homogeneous geometries of sufficiently high dimension which does not depend on the classification of the finite simple groups.
\newline
\textit{2010 Mathematics Subject Classification:\/} Primary 05B25, 20B20, 20B25; Secondary 51A05, 51A15.
\end{abstract}

\maketitle

\section*{Preamble} This note is essentially a reproduction of Appendix I of the Author's doctoral thesis \cite{DEthesis}, though there are a few new, minor corrections.  In the thesis, the Appendix was a sequel to Chapters 3 - 6 of \cite{DEthesis} which were published as \cite{DEHG}. Thus, the text below is a complement to \cite{DEHG} and we use the notation and terminology of that paper without further comment. The main changes we have made to Appendix I of \cite{DEthesis} are to adapt the references, so that we refer to results in \cite{DEHG} rather than the corresponding results in \cite{DEthesis}. 

The main result of \cite{DEHG} was a proof of the classification of infinite, locally finite, homogeneous geometries which did not depend on the classification of finite simple groups. A different proof, also not relying on the classification of finite simple groups, had previously been given by Zilber (see the references in \cite{DEHG}). In particular, the geometries considered  were of infinite dimension. By contrast, Theorem \ref{thm1} below assumes only that the geometry is of sufficiently large finite dimension (and so can be finite). As the complements of Jordan sets for a finite primitive Jordan group form a homogeneous geometry (with the Jordan group acting as a group of automorphisms), Theorem \ref{thm1} also gives a classification (not relying on the classification of the finite simple groups) of the finite primitive Jordan groups which are not $3$-transitive and which have `enough' (at least 24) different sizes of Jordan sets. 

The final version of the thesis \cite{DEthesis} contains a number of corrections written in by hand and is not available in electronic form. This explains why the Author has re-typed the material rather than making a scanned copy available. The Author thanks Michael Zieve for suggesting that it would be useful to have these results available in a more accessible form. 

It should be noted that Zilber's paper \cite{Z} contains a different proof (also not using the classification of the finite simple groups) of   Theorem \ref{thm1} under the weaker hypothesis that the dimension of $\X$ is at least 7. Of course, using the classification of finite simple groups, we know that dimension at least 2 will suffice here.

\section*{The Finite Case}

We give a proof of:

\begin{theo}\label{thm1} Let $\X$ be a locally finite homogeneous geometry of dimension at least 23 with at least 3 points on a line. Then $\X$ is a (possibly truncated) projective or affine geometry over a finite field.
\end{theo}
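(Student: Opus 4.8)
The plan is to reduce the statement to classical coordinatization theorems for combinatorial geometries in the Veblen--Young tradition, after first wringing enough numerical and structural regularity out of homogeneity and local finiteness. First I would exploit the fact that any isomorphism between finite flats of $\X$ extends to an automorphism: this forces all lines to carry the same number $s+1\ge 3$ of points, and makes the residue of a flat depend only on its dimension. Hence $\X$ becomes a highly regular, rank-graded geometry whose flag counts are governed by a single parameter $s$, and I would record the basic identities these counts must satisfy.

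The decisive step is the analysis of the planes (the rank-$3$ residues). Using homogeneity together with $s+1\ge 3$, I would classify the admissible planes and extract the fundamental dichotomy on which the whole theorem turns: either any two lines of a plane meet (the \emph{projective} case) or some plane carries a pair of non-meeting lines (the \emph{affine} case). In the projective case I would show that $\X$ satisfies the Veblen--Young axiom (equivalently, that $\X$ is modular): any two coplanar lines meet. The Veblen--Young theorem then identifies a geometry of rank at least $4$ with this property as a projective space, necessarily Desarguesian, hence coordinatized by a division ring; local finiteness makes this ring finite, and Wedderburn's little theorem makes it a commutative field. The large lower bound on $\dim\X$ guarantees rank well above $4$ and, more importantly, leaves room for the configurational arguments that must produce Desargues from homogeneity alone.

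In the affine case I would adjoin a hyperplane ``at infinity'' so as to embed $\X$ into, or realize it as a deletion from, a projective geometry, and then fall back on the projective analysis. The truncated geometries appear exactly when homogeneity admits flats only up to a bounded dimension, and I would verify directly that truncations of projective and affine spaces over finite fields remain homogeneous and are the only further possibilities. Throughout, the hypothesis $\dim\X\ge 23$ plays the role of providing the many distinct flat-dimensions (the ``24 sizes'' in the Jordan-group reformulation) needed to run the counting and the repeated local-to-global extension arguments.

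The main obstacle I anticipate is precisely the plane-classification and Desargues step. Since the proof must avoid the classification of finite simple groups, one cannot read coordinatizability off the automorphism group; instead the Desargues configuration must be forced combinatorially, purely from the high-dimensional homogeneity, while simultaneously excluding exotic homogeneous planes that are not coordinatizable. Controlling this propagation of rigid local structure up to a global algebraic coordinatization, and handling the non-modular affine and truncated cases uniformly, is where I expect the bulk of the difficulty --- and the need for a dimension bound as large as $23$ --- to reside.
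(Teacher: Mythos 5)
Your proposal rests on a step that does not survive scrutiny: the classification of the rank-$3$ residues and the dichotomy ``every plane is projective or affine''. In a homogeneous geometry in the sense of this paper there is no a priori reason for planes to be linear spaces of classical type at all, and indeed the whole difficulty of the theorem is that the counting conditions admit exotic plane parameters that are neither projective nor affine --- these survive all local analysis and are exactly the residual cases (1), (2), (3) of Corollary \ref{Cor4} (e.g.\ $\alpha'=0$, $\alpha = s_1(\sqrt{s_1}\pm 1)^2$). Classifying the planes directly is essentially as hard as the theorem itself: in dimension $2$ the statement is false without the classification of finite simple groups as an input, which is precisely why the dimension hypothesis here is $23$ (and cannot, by these methods, be pushed down to small rank). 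Consequently the Veblen--Young/Desargues route has no engine: homogeneity does not force modularity, no combinatorial production of the Desargues configuration is available, and your ``affine case'' embedding presupposes the very projective structure that is in question. You have, in effect, assumed the hard part and relegated it to an anticipated ``obstacle'' without a mechanism for overcoming it.

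The paper's actual proof is of an entirely different character: it is a parameter-counting and Diophantine argument, with no coordinatization step. From \cite{DEHG} one has intersection-number parameters $\alpha, \alpha'$ and point counts $s_i$; Lemma 1 gives the growth estimate $s_r \geq (s_2-s_1)^{r-1}/(s_1-1)^{r-2}$, and Lemmas 2 and 3 show that in the non-classical parameter regimes the upper bounds on $s_r$ coming from Theorems 4.3 and 5.2 of \cite{DEHG} force $r \leq 19$ (resp.\ $r \leq 16$). Hence in dimension at least $20$ only the three exceptional conditions of Corollary \ref{Cor4} can obstruct classicality. These are then killed by localising at a point, a line and a plane (dimension $23$ ensures each localisation still has dimension at least $20$) and checking, case by case in Proposition \ref{Prop5}, that the induced parameters would require certain explicit sextic and degree-$12$ polynomials to take perfect square values; writing each as $g^2 - h$ and factoring $(g-a)(g+a) = h$ shows no integer solutions exist for $t \geq 2$. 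Your intuition that the bound $23$ reflects ``many distinct flat-dimensions'' is correct in spirit, but the dimensions are consumed by the growth inequality and the triple localisation, not by configurational arguments. To repair your proposal you would have to replace the plane-classification step with quantitative control of this kind; as written, the proposal does not constitute a viable proof strategy for the stated theorem.
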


The proof of this result is in a series of lemmas, which are tightenings of results which have appeared in \cite{DEHG}. We use the notation of Section 2.4 of that paper, and assume throughout that $\X = (X, \cl)$ is a locally finite homogeneous geometry of dimension at least 3. In particular, note that the dimension of a closed set is one less than the number of elements in a basis of the set and an $i$-flat is a closed set of dimension $i$. Points, lines and planes are closed sets of dimension 0, 1, 2 respectively. We denote by $s_i$ the number of points in an $i$-flat. The parameters $\alpha$ and $\alpha'$ are defined in 2.4 of \cite{DEHG}.

\begin{lemma} If $r \geq 3$, then $s_r \geq (s_2-s_1)^{r-1}/(s_1-1)^{r-2}$.
\end{lemma}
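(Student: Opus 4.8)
The plan is to reduce the statement to a growth estimate for the point-counts of consecutive flats. Write $d_i = s_i - s_{i-1}$ (with the convention $s_0 = 1$), so that $d_i$ is the number of points of an $i$-flat lying outside a fixed $(i-1)$-subflat, and set $\rho = (s_2-s_1)/(s_1-1) = d_2/d_1$. Since $s_r \ge s_r - s_{r-1} = d_r$ and
\[
\rho^{\,r-1}(s_1-1) = \frac{(s_2-s_1)^{r-1}}{(s_1-1)^{r-2}},
\]
it suffices to prove $d_r \ge \rho^{\,r-1} d_1$. I would obtain this by telescoping the per-step bound $d_{i+1} \ge \rho\, d_i$ for $1 \le i \le r-1$, the case $i=1$ being the defining equality $d_2 = \rho\, d_1$.

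I would deduce $d_{i+1} \ge \rho\, d_i$ from the log-convexity estimate $d_{i+1} d_{i-1} \ge d_i^2$, which forces $d_{i+1}/d_i \ge d_i/d_{i-1} \ge \cdots \ge d_2/d_1 = \rho$. To prove $d_{i+1} d_{i-1} \ge d_i^2$ at an arbitrary level I would pass to the contraction $\X/p$ of $\X$ at a point $p$, whose points are the lines of $\X$ through $p$ and whose flats are the flats of $\X$ containing $p$. Using homogeneity (as in \cite{DEHG}) this is again a locally finite homogeneous geometry, and counting the lines through $p$ in a flat gives $s_j^{\X/p} = (s_{j+1}-1)/(s_1-1)$, hence $d_j^{\X/p} = d_{j+1}/(s_1-1)$; thus contraction shifts the difference sequence by one index. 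Iterating $i-2$ times, the inequality $d_{i+1}d_{i-1} \ge d_i^2$ becomes the single base instance $d_3 d_1 \ge d_2^2$, i.e. $(s_3-s_2)(s_1-1) \ge (s_2-s_1)^2$, in a geometry of dimension at least $3$.

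For this base inequality I would work in a $3$-flat $F$ with a plane $H$ and a point $p \in F \setminus H$, and double count the pairs $(x,\ell)$ where $\ell$ is a line of $H$ and $x \in P_\ell \setminus H$, with $P_\ell = \cl(\{p\}\cup\ell)$ the plane spanned by $p$ and $\ell$. Since $P_\ell \cap H = \ell$, summing over $\ell$ gives $\lambda(H)(s_2-s_1)$, where $\lambda(H) = s_2(s_2-1)/(s_1(s_1-1))$ is the number of lines of $H$. Summing instead over $x$ requires the number $c_x$ of lines $\ell$ with $x \in P_\ell$: if $\cl(\{p,x\})$ meets $H$ in a (necessarily unique) point $y$, then $x \in P_\ell$ exactly when $y \in \ell$, so $c_x$ is the number $(s_2-1)/(s_1-1)$ of lines of $H$ through $y$, while $c_p = \lambda(H)$.

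The \emph{hard part} is the remaining, ``parallel'' points $x$, for which $\cl(\{p,x\})$ is disjoint from $H$: here modularity fails, $P_\ell \cap H$ can drop in dimension, and $c_x$ equals the number of planes through the line $\cl(\{p,x\})$ that meet $H$ in a line. I expect to need the homogeneity of $\X$ and the parameters $\alpha,\alpha'$ of \cite{DEHG} to control the line-size of the rank-$2$ contraction along $\cl(\{p,x\})$ — equivalently, the number of planes through a fixed line — and thereby to bound these $c_x$ by $\rho$. This bound is exactly what separates the desired estimate (with the factor $s_2-s_1$) from the weaker one (with $s_1-1$ in its place) that a line-only count produces, and it is tight in the affine case. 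Feeding it back into the double count, together with the relations among the parameters recorded in \cite{DEHG}, should then yield $(s_3-s_2)(s_1-1) \ge (s_2-s_1)^2$ and close the induction.
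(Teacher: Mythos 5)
Your skeleton coincides with the paper's: writing $d_i = s_i - s_{i-1}$, the paper reads the log-convexity $d_r d_{r-2} \geq d_{r-1}^2$ directly off Lemma 3.2.3 of \cite{DEHG} (the displayed inequality $\frac{s_r-s_{r-2}}{s_{r-1}-s_{r-2}} \geq \frac{s_{r-1}-s_{r-3}}{s_{r-2}-s_{r-3}}$ is equivalent to it) and then telescopes the nondecreasing ratios $d_{i+1}/d_i$ exactly as you do. Since the present note is explicitly a complement to \cite{DEHG}, that citation finishes the paper's proof. You instead undertake to reprove the log-convexity, and that is where the genuine gap lies. Your reduction by localisation is sound (Lemma 2.1.1 of \cite{DEHG} keeps localisations homogeneous, and $\hs_j = (s_{j+1}-1)/(s_1-1)$ does shift the difference sequence by one), but the base inequality $(s_3-s_2)(s_1-1) \geq (s_2-s_1)^2$ is never established: for the parallel points you offer only the expectation that homogeneity and the parameters $\alpha, \alpha'$ ``should'' bound $c_x$. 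That bound is the entire content of the base case, so as written the proof is incomplete.

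Moreover, the bound you announce as the goal, $c_x \leq \rho = (s_2-s_1)/(s_1-1)$, would not suffice even if proved. Away from the parallel points your double count is exact: setting $\mu = (s_2-1)/(s_1-1)$, the parallel points number exactly $s_3 - 1 - s_2(s_1-1)$ and their $c_x$ sum to exactly $s_2(s_2-1)(\mu - s_1)/s_1$, so a uniform bound $c_x \leq B$ yields $s_3 \geq 1 + s_2(s_1-1) + s_2(s_2-1)(\mu-s_1)/(s_1 B)$, and (using $s_2 - 1 = (s_1-1)\mu$) this right-hand side equals the target $s_2 + (s_2-s_1)^2/(s_1-1)$ precisely when $B = s_2/s_1$; any larger $B$ falls strictly short whenever parallel points exist, i.e.\ when $\mu > s_1$. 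Since $\rho - s_2/s_1 = (\alpha-1)/s_1$, your proposed $B = \rho$ is strictly too weak as soon as $\alpha > 1$ --- exactly the regime relevant to Corollary \ref{Cor4}. The correct bound in fact needs no parameters and no homogeneity: the intersections $P \cap H$, as $P$ ranges over the planes through $m = \cl(\{p,x\})$, partition the points of $H$ into lines and singletons (each $y \in H$ lies in the unique plane $\cl(m \cup \{y\})$, and two planes through $m$ meet only in $m$, while $P \cap H = H$ is impossible as $m \cap H = \emptyset$), so at most $s_2/s_1$ of these planes meet $H$ in a line, i.e.\ $c_x \leq s_2/s_1$. Feeding this into your exact count closes the base case, with equality in the affine situation as you predicted --- but none of this is in your write-up, so the attempt as it stands does not prove the lemma.
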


\begin{proof} By (\cite{DEHG}, Lemma 3.2.3) we have
\[ \frac{s_r-s_{r-2}}{s_{r-1}-s_{r-2}} \geq \frac{s_{r-1}-s_{r-3}}{s_{r-2}-s_{r-3}}\]
and so
\[ s_r \geq s_{r-2}+(s_{r-1}-s_{r-2})\frac{s_{r-1}-s_{r-3}}{s_{r-2}-s_{r-3}}.\]
Thus
\[s_r-s_{r-1} \geq \frac{(s_{r-1}-s_{r-2})^2}{s_{r-2}-s_{r-3}}.\]
Using this relation repeatedly, we obtain
\[ s_r-s_{r-1} \geq (s_2-s_1)^{r-1}/(s_1-1)^{r-2}\]
which proves the lemma.
\end{proof}

\begin{lemma} Suppose that $\alpha' = 0$ and $\alpha \geq 1$ and $s_1 \geq 3$. If $\theta, \vphi, \psi$ are as in (\cite{DEHG}, Theorem 4.3; see also p.319), then 
\[ s_r \leq \frac{\vphi^2(\theta\vphi - 4\alpha s_1\psi)^2-\vphi}{4\alpha s_1}\]
implies that $r \leq 19$.
\end{lemma}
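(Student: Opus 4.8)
The plan is to set the exponential lower bound for $s_r$ from Lemma 1 against the hypothesised upper bound. The right-hand side of the displayed inequality is a fixed quantity assembled from the local parameters $s_1, s_2, \alpha$ (through $\theta, \vphi, \psi$), so it does not depend on $r$, whereas Lemma 1 forces $s_r$ to grow geometrically in $r$. The inequality can therefore hold only for $r$ below an absolute threshold, and a careful estimate identifies that threshold as $19$.

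In detail, I would first write $\lambda = (s_2 - s_1)/(s_1 - 1)$ and rewrite Lemma 1 as $s_r \geq (s_1 - 1)\lambda^{r-1}$. To see that $\lambda$ is bounded away from $1$, I would note that in a plane the $s_2 - 1$ points other than a fixed point $p$ are partitioned by the $t$ lines through $p$, each carrying a further $s_1 - 1$ points; hence $s_2 - 1 = t(s_1 - 1)$ and $\lambda = t - 1$. Since every line has $s_1 \geq 3$ points, a short argument rules out $p$ lying on only two lines (a two-point line would otherwise arise), so $t \geq 3$ and $\lambda \geq 2$. This uniform lower bound on the growth ratio is what allows the final bound on $r$ to be independent of the particular geometry.

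Next I would unwind the definitions of $\theta, \vphi, \psi$ from (\cite{DEHG}, Theorem 4.3), under the standing hypotheses $\alpha' = 0$ and $\alpha \geq 1$, and re-express the right-hand side of the displayed inequality in terms of $\lambda$, $s_1$ and $\alpha$. Using $\alpha \geq 1$ and $s_1 \geq 3$, and exploiting the way $\theta, \vphi, \psi$ scale with these parameters, I would bound it above by $c\,\lambda^{N}$ for absolute constants $c$ and $N$ independent of $r$ and of the particular geometry. Combining with the lower bound gives $(s_1 - 1)\lambda^{r-1} \leq s_r \leq c\,\lambda^{N}$; since $\lambda \geq 2$ and $s_1 - 1 \geq 2$, taking logarithms yields $r \leq N + O(1)$.

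The decisive step is entirely quantitative. Two things must be controlled at once: the exponent $N$, which comes from how $\theta, \vphi, \psi$ scale, must be pinned down exactly, and the constant $c$ together with the lower-order terms must be checked, using $\lambda \geq 2$, never to push the resulting integer bound above $19$. Ensuring that the upper estimate is genuinely uniform --- that no hidden factor grows with $s_1$ --- is the subtle point; the structural idea, geometric growth in $r$ against an $r$-independent ceiling, is straightforward.
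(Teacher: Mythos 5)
Your structural idea coincides with the paper's: rewrite Lemma 1 as $s_r \geq (s_1-1)\lambda^{r-1}$ with $\lambda = (s_2-s_1)/(s_1-1)$ (correct, and in fact $\lambda = \alpha + s_1 - 1$, since $s_2-s_1 = (s_1-1)(\alpha+s_1-1)$) and play this geometric growth against the $r$-independent ceiling. But the decisive quantitative step, which you defer, is committed to the wrong mechanism, and that is a genuine gap. You propose to bound the ceiling by $c\,\lambda^N$ with \emph{absolute} constants and then conclude via $\lambda \geq 2$. After the natural estimates --- which themselves need algebraic input you omit, namely the factorization $\theta\vphi - 4\alpha s_1\psi = (\alpha-s_1(s_1-1))\bigl(\alpha(2s_1-1)+s_1(s_1-1)\bigr)\bigl(\alpha+s_1(s_1-1)\bigr)$ and the two-sided estimate on $\vphi = (\alpha-s_1(s_1-1))^2 - 4\alpha s_1$ giving $\vphi^2 < (\alpha+s_1(s_1-1))^4$ --- the ceiling comes out as (slightly less than) $(s_2-s_1)^{10} = (s_1-1)^{10}\lambda^{10}$. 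The prefactor $(s_1-1)^{10}$ grows with $s_1$ and cannot be absorbed into an absolute $c$; and with only $\lambda \geq 2$ one gets $\lambda^{r-11} \leq (s_1-1)^9$, i.e.\ $r \leq 11 + 9\log_2(s_1-1)$, which is unbounded in $s_1$. So the uniformity you flag as ``the subtle point'' genuinely fails along your route. The paper's key move, absent from your plan, is the absorption $(s_2-s_1) \geq (s_1-1)^2$, equivalently $\lambda \geq s_1-1$, which trades the prefactor for extra powers of $\lambda$: $(s_2-s_1)^{10} \leq (s_2-s_1)^{19}/(s_1-1)^{18} = (s_1-1)\lambda^{19}$, and comparing with $(s_1-1)\lambda^{r-1}$ yields $r \leq 19$ at once. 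Your plane-counting argument for $\lambda \geq 2$ is sound but plays no role in producing the number $19$ (here $\lambda \geq s_1 \geq 3$ is immediate anyway).

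A second omission: the paper first extracts an integrality constraint, $a_{443} = (s_1-1)\alpha^2/s_1 \in \mathbb{Z}$, forcing $\alpha \geq \sqrt{s_1}$ and hence $\alpha \geq 2$. This is not cosmetic: for instance the step $s_2-s_1 = (s_1-1)\alpha+(s_1-1)^2 \geq \alpha+s_1(s_1-1)$, needed to replace $\alpha+s_1(s_1-1)$ by $s_2-s_1$ throughout, fails at $\alpha = 1$, $s_1 = 3$ (it reads $6 \geq 7$). Your standing hypotheses $\alpha \geq 1$, $s_1 \geq 3$ alone therefore do not carry the estimates, and any execution of your plan would need this arithmetic input as well as the absorption step above.
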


\begin{proof}\footnote{Slight changes from final version of thesis} First note that (in the notation of Section 4 of \cite{DEHG}) $$a_{443} = (s_1-1)\alpha^2/s_1$$ and so $\alpha \geq 1$ implies that $\alpha \geq\sqrt{s_1}$. 

Now, 
\begin{multline*}
\vphi = \alpha^2 +s_1^2(s_1-1)^2 - 2\alpha s_1(s_1+1)
= (\alpha - s_1(s_1-1))^2 - 4\alpha s_1.
\end{multline*}
If this is positive, then $\vphi < (\alpha - s_1(s_1-1))^2.$ If it is negative, then $\vphi > -4\alpha s_1$. So in either case we have $\vphi^2 < (\alpha+s_1(s_1-1))^4$. Moreover, the $-\vphi$ term in the numerator of the above expression makes a contribution of at most $+1$; the upper bound estimates below are sufficiently crude that we may ignore this.

Also, from the calculations preceding Theorem 4.3 of \cite{DEHG}, 
\begin{multline*}
(\theta\vphi - 4\alpha s_1\psi)^2 \\
= (\alpha-s_1(s_1-1))^2(\theta(\alpha-s_1(s_1-1))+4\alpha s_1^2(s_1-1))^2 \\
= (\alpha-s_1(s_1-1))^2((s_1-s_1^2+2\alpha s_1-\alpha)(\alpha-s_1(s_1-1))+4\alpha s_1^2(s_1-1))^2\\
= (\alpha-s_1(s_1-1))^2(\alpha(2s_1-1)+s_1(s_1-1))^2(\alpha+s_1(s_1-1))^2\\
< (\alpha+s_1(s_1-1))^4(\alpha(2s_1-1)+s_1(s_1-1))^2.
\end{multline*}
Thus
\[ \frac{\vphi^2(\theta\vphi - 4\alpha s_1\psi)^2}{4\alpha s_1} < 
\frac{(\alpha+s_1(s_1-1))^8(\alpha(2s_1-1)+s_1(s_1-1))^2}{4\alpha s_1}.\]

Now,
\[s_2-s_1 = (s_1-1)\alpha +(s_1-1)^2 \geq \alpha+s_1(s_1-1)\]
(as $s_1 \geq 3$ and $\alpha \geq 2$). So
\begin{multline*}\frac{\vphi^2(\theta\vphi - 4\alpha s_1\psi)^2}{4\alpha s_1} < 
\frac{(s_2-s_1)^8(\alpha(2s_1-1)+s_1(s_1-1))^2}{4\alpha s_1}\\
= \frac{(s_2-s_1)^8(2\alpha+\alpha/(s_1-1)+s_1)^2(s_1-1)^2}{4\alpha s_1}\\
< \frac{25}{16}\frac{(s_2-s_1)^8(\alpha+s_1)^2(s_1-1)^2}{\alpha s_1}\\
= \frac{25}{16}\frac{(s_2-s_1)^8(s_2-1)^2}{\alpha s_1}<(s_2-s_1)^{10}.
\end{multline*}
As $(s_2-s_1) \geq (s_1-1)^2$, we have
\[ (s_2-s_1)^{10} \leq \frac{(s_2-s_1)^{19}}{(s_1-1)^{18}}.\]
Thus, by Lemma 1, if $r$ is as in the statement of the lemma, then $r \leq 19$.

\end{proof}

\begin{lemma} Suppose that $\alpha' = 1$ and $\alpha > 1$ and $s_1 \geq 3$. If $\beta = \alpha-1$ and
\[s_r \leq \frac{(4\beta s_1+(s_1^2-\beta)^2)^2(s_1^2-\beta)^2(s_1^2+\beta)^2(s_1^2-\beta+2s_1\beta)^2}{4\beta s_1},\]
then $r \leq 16$.
\end{lemma}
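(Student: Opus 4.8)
The plan is to run the proof of the previous lemma (the $\alpha'=0$ case) essentially verbatim, replacing the parameter $\alpha$ there by $\beta=\alpha-1$ and inserting the $\alpha'=1$ specialisations of $\vphi$ and $\theta\vphi-4\alpha s_1\psi$ coming from (\cite{DEHG}, Theorem 4.3) and the calculations preceding it. Concretely, I expect the four squared factors in the numerator of the hypothesised bound to be accounted for by
\[ \vphi = (s_1^2-\beta)^2 + 4\beta s_1, \qquad \theta\vphi - 4\alpha s_1\psi = \pm(s_1^2-\beta)(s_1^2+\beta)(s_1^2-\beta+2s_1\beta), \]
so that the numerator is exactly $\vphi^2(\theta\vphi-4\alpha s_1\psi)^2$ and the denominator is $4\beta s_1$. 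The first task is therefore to read off these two identities from Section 4 of \cite{DEHG} under $\alpha'=1$; this is the exact analogue of the opening computation of the previous proof, and the clean factorisation of $\theta\vphi-4\alpha s_1\psi$ into three linear-in-$\beta$ factors is what makes everything match.

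The second step is to bound the numerator crudely. Here $\vphi>0$ automatically, and since
\[ (s_1^2+\beta)^2 - \vphi = 4\beta s_1(s_1-1) > 0, \]
we get $\vphi < (s_1^2+\beta)^2$, hence $\vphi^2 < (s_1^2+\beta)^4$; combining this with $(s_1^2-\beta)^2 < (s_1^2+\beta)^2$ yields
\[ \vphi^2(\theta\vphi-4\alpha s_1\psi)^2 < (s_1^2+\beta)^8(s_1^2-\beta+2s_1\beta)^2. \]
Dividing by $4\beta s_1$ leaves a product of exactly the shape met before: eight copies of $(s_1^2+\beta)$ together with a residual factor $(s_1^2-\beta+2s_1\beta)^2/(4\beta s_1)$.

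The final step is to turn this into a power of $s_2-s_1$ and invoke Lemma 1. Using the formula for $s_2-s_1$ valid when $\alpha'=1$ (Section 2.4 of \cite{DEHG}), I would show that $(s_1^2+\beta)\le s_2-s_1$ and that the residual factor is absorbed up to a bounded constant — just as $\alpha(2s_1-1)+s_1(s_1-1)$ was handled in the previous proof via the $\tfrac{25}{16}$ estimate — so that the whole right-hand side is bounded by a fixed power $(s_2-s_1)^k$. A sharper lower bound for $s_2-s_1$ in terms of $s_1-1$, available when $\alpha'=1$, should then give $(s_2-s_1)^k \le (s_2-s_1)^{16}/(s_1-1)^{15}$, whereupon Lemma 1 (whose right-hand side at $r=17$ is exactly $(s_2-s_1)^{16}/(s_1-1)^{15}$) forces $r\le 16$.

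I expect the main obstacle to be precisely this last exponent bookkeeping. The conclusion $r\le 16$ is strictly sharper than the $r\le 19$ of the previous lemma, so it cannot come from the general inequality $s_2-s_1\ge (s_1-1)^2$ used there; it must exploit a stronger growth relation special to $\alpha'=1$. Pinning down that relation and combining it with the power $k$ so as to land on exactly $16$ (rather than a weaker constant), while checking that the residual constants and the inequality $(s_1^2+\beta)\le s_2-s_1$ survive across the whole range $s_1\ge 3$, $\beta\ge 1$ — the analogue of the $\alpha\ge 2$ verification in the preceding proof — is where the real care will be needed.
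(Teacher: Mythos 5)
Your skeleton is right (crude factor bounds, conversion into powers of $s_2-s_1$, Lemma 1 applied at $r=17$), and you correctly sense that the improvement from $19$ to $16$ must come from something special to the $\alpha'=1$ case — but you leave exactly that ingredient unidentified, and it is the heart of the paper's proof. It is not a ``growth relation'' to be hunted for in the formula for $s_2-s_1$: it is an arithmetic fact. The paper opens by observing that (in the notation of Section 5 of \cite{DEHG} — note the $\alpha'=1$ case is Theorem 5.2 and Section 5, not Theorem 4.3 and Section 4 as you cite) the intersection number $a_{452}=(s_1-1)\beta/s_1$ is an integer, whence $s_1$ divides $\beta$ and so $\beta\geq s_1$. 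This is the exact analogue of the $a_{443}$ step giving $\alpha\geq\sqrt{s_1}$ in the previous lemma, an analogy you invoke but do not carry out. Without $\beta\geq s_1$ your pivotal inequality $s_1^2+\beta\leq s_2-s_1$ is simply false: since $s_2-s_1=(s_1-1)(\beta+s_1)$, it is equivalent to $\beta(s_1-2)\geq s_1$, which fails e.g.\ at $s_1=3$, $\beta\in\{1,2\}$ (there $s_2-s_1=8$ or $10$ versus $s_1^2+\beta=10$ or $11$). Worse, no amount of exponent bookkeeping can rescue the range $\beta$ bounded: for fixed small $\beta$ the hypothesised bound is of order $s_1^{19}/\beta$ while $(s_2-s_1)^{16}/(s_1-1)^{15}$ is of order $s_1^{17}$, so the conclusion $r\leq 16$ is unreachable. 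Since you explicitly propose to verify everything ``across the whole range $s_1\geq 3$, $\beta\geq 1$'', the gap is genuine, not just an omitted verification.

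Two secondary points. First, reconstructing $\vphi$ and $\theta\vphi-4\alpha s_1\psi$ from \cite{DEHG} is unnecessary here: unlike the previous lemma, this one states the bound explicitly, and the paper works directly with the four factors, using the cancellation $(s_1^4+\beta^2)(s_1^2-\beta)(s_1^2+\beta)=s_1^8-\beta^4$ together with $s_1^8+\beta^4<(s_1^2+\beta)^4\leq(s_2-s_1)^4$ and the exact identity $s_1+\beta=(s_2-s_1)/(s_1-1)$, rather than your wholesale replacement of $(s_1^2-\beta)^2$ and $\vphi^2$ by powers of $(s_1^2+\beta)$. Second, your plan to absorb the residual factor $(s_1^2-\beta+2s_1\beta)^2/(4\beta s_1)$ ``up to a bounded constant'' as with the $\tfrac{25}{16}$ estimate cannot work as stated: that quantity is unbounded (of size roughly $\beta s_1$ for large $\beta$), and the paper instead bounds $s_1^2-\beta+2s_1\beta$ above by $(s_1+\beta)^2$, converting it too into powers of $(s_2-s_1)/(s_1-1)$; the denominator $\beta s_1$ is only used at the very end via the trivial $2\beta s_1>s_1-1$ to reach $(s_2-s_1)^{16}/(s_1-1)^{15}$, the Lemma 1 threshold at $r=17$.
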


\begin{proof} First, note that in the notation of Section 5 of \cite{DEHG}, $$a_{452} = (s_1-1)\beta/s_1$$ and so $s_1$ divides $\beta$. Now,
\[(4\beta s_1+(s_1^2-\beta)^2) = s_1^4+\beta^2- \beta s_1(2s_1 - 4).\]
If this is positive, it is less than $s_1^4 + \beta^2$. If it is negative it is greater than $-2s_1^2\beta$. Thus, in either case,
\[ (4\beta s_1+(s_1^2-\beta)^2)^2 < 2(s_1^4+\beta^2)^2.\]
Also, 
\[ 0 < s_1^2-\beta+2s_1\beta <(s_1+\beta)^2,\]
and so
\[s_r < \frac{(s_1^4+\beta^2)^2(s_1^2-\beta)^2(s_1^2+\beta)^2(s_1+\beta)^2}{2\beta s_1} = \frac{(s_1^8-\beta^4)^2(s_1+\beta)^2}{2\beta s_1}.\]

Since $s_2-s_1 = \beta(s_1-1)+s_1(s_1-1)$ and $\beta \geq s_1$ and $s_1 \geq 3$, it follows that $s_2-s_1 \geq s_1^2+\beta$ and so $s_1^8+\beta^4 < (s_2-s_1)^4$. Thus 
\[ s_r < \frac{(s_2-s_1)^{10}}{2\beta s_1(s_1-1)^2}\]
as $(s_1+\beta) = (s_2-s_1)/(s_1-1)$. Then
\[s_r < \frac{(s_2-s_1)^{16}}{2(s_1-1)^{14}\beta s_1} < \frac{(s_2-s_1)^{16}}{(s_1-1)^{15}}.\]
By Lemma 1, it then follows that $r \leq 16$.
\end{proof}

\begin{corollary} \label{Cor4} Let $\X$ be a locally finite homogeneous geometry of dimension at least 20 with at least 3 points on a line. Then either $\X$ is a (possibly truncated) projective or affine geometry over a finite field, or one of the following conditions holds in $\X$:
\begin{enumerate}
\item[(1)] $\alpha' = 0$, $s_1$ is a square, $\alpha = s_1(\sqrt{s_1}\pm1)^2$ and $s_i$ is a square for $i \geq 3$;
\item[(2)] $\alpha' = 0$, $\alpha = s_1(s_1-1)$ and $(s_1-1)(s_i-1)$ is a square for $i \geq 3$;
\item[(3)] $\alpha' = 1$, $\alpha = s_1^2+1$ and $s_i/s_1$ is a square for $i \geq 3$.
\end{enumerate}
\end{corollary}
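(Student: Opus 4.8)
The plan is to feed the two dichotomies of \cite{DEHG} (Theorem 4.3 in the case $\alpha' = 0$, and the parallel computation of Section 5 in the case $\alpha' = 1$) into Lemmas 2 and 3, and to use the dimension hypothesis to kill the ``$s_r$ small'' alternative in each. First I would invoke the structural results of \cite{DEHG} to reduce to the situation in which $\X$ is not already projective or affine and $s_1 \geq 3$: there the classification of admissible local parameters leaves only $\alpha' = 0$ with $\alpha \geq 1$ and $\alpha' = 1$ with $\alpha > 1$, the degenerate values ($\alpha' = 0,\ \alpha = 0$; $\alpha' = 1,\ \alpha \leq 1$; and $\alpha' \geq 2$) being precisely the projective and affine cases. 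Since $\X$ has dimension at least $20$, flats of every dimension $r$ with $3 \leq r \leq 20$ exist, so all parameters $s_r$ up to $r = 20$ are defined and the lemmas may be applied with $r = 20$.

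Second, consider $\alpha' = 0$ with $\alpha \geq 1$. Theorem 4.3 of \cite{DEHG} records the multiplicities attached to the pair of eigenvalues whose discriminant is $\vphi = (\alpha - s_1(s_1-1))^2 - 4\alpha s_1$, and shows that these multiplicities can be non-negative integers only if either $\vphi$ is a perfect square or the coefficient of $\sqrt{\vphi}$ in the multiplicity formula vanishes. In the second situation that coefficient, set equal to $0$, yields after simplification an inequality equivalent to $s_r \leq (\vphi^2(\theta\vphi - 4\alpha s_1\psi)^2 - \vphi)/(4\alpha s_1)$, the hypothesis of Lemma 2, for every $r$ except the isolated solution $\alpha = s_1(s_1-1)$; hence by Lemma 2 this branch forces $r \leq 19$, incompatible with the $20$-flat, unless $\alpha = s_1(s_1-1)$, which is case (2). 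In the first situation, $\vphi = 0$ solves to $\alpha = s_1(\sqrt{s_1}\pm 1)^2$ (whence $s_1$ is a square), which is case (1), while a positive square value of $\vphi$ makes the successive differences $s_r - s_{r-1}$ a geometric progression and so returns a projective or affine geometry.

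Third, the case $\alpha' = 1$, $\alpha > 1$ is handled identically with $\beta = \alpha - 1$. The computation of Section 5 of \cite{DEHG} produces the same alternative, the bad branch being exactly the hypothesis of Lemma 3, which now forces $r \leq 16$ and is again excluded by the existence of a $20$-flat. The surviving zero-discriminant branch is $s_1^2 = \beta$, that is $\alpha = s_1^2 + 1$, which is case (3). In each of the three surviving cases the relevant discriminant has collapsed (to $0$ for (1), via $\alpha - s_1(s_1-1) = 0$ for (2), and to $s_1^2 - \beta = 0$ for (3)), so the Lemma 2 / Lemma 3 bound on $s_r$ degenerates to a vacuous inequality, consistent with these being genuine high-dimensional cases.

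Finally, the per-level square conditions on $s_i$ (that $s_i$, that $(s_1-1)(s_i-1)$, resp.\ that $s_i/s_1$ be a square for $i \geq 3$) come from running the same integrality argument not only at the top but at each level $i$ with $3 \leq i \leq \dim \X$: in each surviving case the multiplicity of the relevant eigenvalue at level $i$ is an integer only if the stated expression in $s_i$ is a perfect square. The hard part, and where the real work lies, is exactly this last step: one must show that the single global constraint on $\alpha$ obtained at the top level is compatible with, and in fact forces, integrality of the level-$i$ multiplicities for \emph{every} $i \geq 3$, and then translate each such integrality statement into the clean Diophantine form recorded in (1)--(3). Careful bookkeeping of the degenerate parameter values---so that the excluded branches genuinely coincide with the projective and affine geometries and no spurious case survives---is the other point demanding attention.
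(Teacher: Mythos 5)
Your proposal takes essentially the same route as the paper: the paper's proof of Corollary~\ref{Cor4} is exactly the observation that Theorems 3.2.9, 4.3 and 5.2 of \cite{DEHG}, combined with Lemmas 2 and 3 above (which show the ``$s_r$ bounded'' alternative forces $r \leq 19$, resp.\ $r \leq 16$, and so fails once the dimension is at least 20), leave only the projective/affine conclusion or conditions (1)--(3), following the argument of Section 6 of \cite{DEHG}. The extra material you sketch --- the eigenvalue-multiplicity mechanism behind the dichotomies and the level-by-level square conditions you flag as ``the hard part'' --- is precisely the content of the cited theorems of \cite{DEHG} and is used as a black box here, so your outline is correct and no new work beyond the two lemmas is actually required.
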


\begin{proof} This is deduced from the above lemmas and Theorem 3.2.9, Theorem 4.3, and Theorem 5.2 in \cite{DEHG}, as in Section 6 of \cite{DEHG}.
\end{proof}

\begin{proposition} \label{Prop5} Let $\X= (X, \cl)$ be a locally finite homogeneous geometry and let $Y$ be a closed subset of $X$ such that the localisation $\X_Y$ has dimension at least 3. Let $z \in X \setminus Y$ and $Z = \cl(Y \cup \{z\})$. Consider the following situations:
\begin{enumerate}
\item[(a)] condition (1) holds in $\X_Y$ and (1) holds in $\X_Z$;
\item[(b)] condition (1) holds in $\X_Y$ and (2) holds in $\X_Z$;
\item[(c)] condition (2) holds in $\X_Y$ and (2) holds in $\X_Z$;
\item[(d)] condition (3) holds in $\X_Y$ and (1) holds in $\X_Z$;
\item[(e)] condition (3) holds in $\X_Y$ and (2) holds in $\X_Z$;
\item[(f)] condition (3) holds in $\X_Y$ and (3) holds in $\X_Z$,
\end{enumerate}
where conditions (1), (2) and (3) are as in Corollary \ref{Cor4}. Then (a), (b), (d), (e) and (f) are impossible and (c) is impossible if $s_1 \geq 3$.
\end{proposition}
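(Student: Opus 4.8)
The plan is to exploit the fact that $\X_Z$ is simply the localisation of $\X_Y$ at the point determined by $z$. Since $Y \subseteq Z = \cl(Y \cup \{z\})$, iterated localisation gives $\X_Z = (\X_Y)_{\bar z}$, where $\bar z$ is the point $\cl(Y \cup \{z\})$ of $\X_Y$. Write $s_1, s_2, \dots$ and $\alpha, \alpha'$ for the parameters of $\X_Y$, and $t_1, t_2, \dots$ for the flat sizes of $\X_Z$. The lines of $\X_Y$ through $\bar z$ inside a fixed $(i+1)$-flat partition its remaining points into blocks of size $s_1 - 1$, so
\[ t_i = \frac{s_{i+1}-1}{s_1-1} \qquad (i \geq 0), \]
and in particular the number of points on a line of $\X_Z$ is
\[ t_1 = \frac{s_2-1}{s_1-1} = \alpha + s_1, \]
using $s_2 - s_1 = (s_1-1)(\alpha + s_1 - 1)$ (valid whether $\alpha' = 0$ or $\alpha' = 1$). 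More generally, every parameter of $\X_Z$ — its own $\alpha$, which I write $\alpha^\ast$, its $\alpha'$, and the square/divisibility data of Corollary~\ref{Cor4} — is determined by the flat sizes $s_i$ of $\X_Y$. The proposition then becomes the assertion that the type imposed on $\X_Y$ and the type imposed on $\X_Z$ cannot be realised simultaneously by one sequence of flat sizes.

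The two cases in which $\X_Z$ has type (1), namely (a) and (d), fall out at once, because type (1) forces the line size $t_1$ of $\X_Z$ to be a perfect square. In case (a), $\X_Y$ has type (1), so writing $q = \sqrt{s_1}$ we have $\alpha = q^2(q\pm1)^2$ and hence $t_1 = s_1 + \alpha = q^2\bigl((q\pm1)^2+1\bigr)$; but $(q\pm1)^2+1$ lies strictly between two consecutive squares for $q \geq 2$, so $t_1$ is not a square. In case (d), $\X_Y$ has type (3), so $\alpha = s_1^2+1$ and $t_1 = s_1^2 + s_1 + 1$, which satisfies $s_1^2 < t_1 < (s_1+1)^2$ and so is again not a square. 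Either way the requirement that $\X_Z$ have type (1) is contradicted.

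The remaining cases (b), (c), (e) and (f), in which $\X_Z$ has type (2) or (3), need more. Here the line-size test alone is inadequate (indeed for $\X_Y$ of type (2) one finds $t_1 = s_1^2$, a square), and I would instead use $\alpha^\ast$. Applying the level-one relation to $\X_Z$ and substituting $t_i = (s_{i+1}-1)/(s_1-1)$ gives
\[ \alpha^\ast = \frac{t_2-1}{t_1-1} - t_1 = \frac{s_3-s_2}{s_2-s_1} - (t_1-1), \]
so $\alpha^\ast$ is governed by the ratio of successive differences in $\X_Y$. The type of $\X_Z$ pins this down — type (2) demands $\alpha^\ast = t_1(t_1-1)$, type (3) demands $\alpha^\ast = t_1^2+1$, together with $\alpha'^\ast = 0$, respectively $1$ — while the value of $s_3$ for a geometry of the prescribed type in $\X_Y$ is fixed by the recurrence underlying Lemma~1 and by Theorems~4.3 and~5.2 of \cite{DEHG}. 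Substituting the explicit $\alpha$ and $s_3$ for the type of $\X_Y$ and comparing with the $\alpha^\ast$ (and $\alpha'^\ast$) forced by the type of $\X_Z$ gives a numerical contradiction in (b), (e) and (f). In (c), where both geometries have type (2) and $t_1 = s_1^2$, the two square conditions clash: from $\X_Y$ one has $(s_1-1)(s_j-1)$ a square for $j \geq 3$, whereas from $\X_Z$, since $t_1 - 1 = (s_1-1)(s_1+1)$ and $t_j - 1 = (s_{j+1}-s_1)/(s_1-1)$, one has $(t_1-1)(t_j-1) = (s_1+1)(s_{j+1}-s_1)$ a square for $j \geq 3$. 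It is precisely in reconciling these two families that the hypothesis $s_1 \geq 3$ is used; for $s_1 = 2$ the conditions degenerate and no contradiction arises.

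I expect the real work to be concentrated in these last four cases. Cases (a) and (d) are immediate non-square observations, but (b), (c), (e) and (f) require the exact flat-size recurrence (hence the explicit value of $s_3$) for each type, followed by a genuinely number-theoretic incompatibility argument. The hardest point is (c): there both families of square conditions involve the \emph{same} sequence $(s_j)$, so the contradiction cannot come from a single non-square value but must compare the two families of conditions across all $j$ — and this is exactly the step at which $s_1 \geq 3$ becomes indispensable. Organising this arithmetic, rather than the structural localisation identities, is the main obstacle.
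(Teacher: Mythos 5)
Your structural bookkeeping is sound: $\X_Z$ is indeed the localisation of $\X_Y$ at the point $\bar z$, the identity $t_i = (s_{i+1}-1)/(s_1-1)$ is the one the paper uses (its $\hs_i$), and $t_1 = \alpha + s_1$ holds in both the $\alpha'=0$ and $\alpha'=1$ cases. Your disposal of (a) and (d) is correct and complete: type (1) in $\X_Z$ forces $t_1$ to be a square, while $q^2\bigl((q\pm1)^2+1\bigr)$ and $s_1^2+s_1+1$ lie strictly between consecutive squares; the paper simply cites the corresponding Proposition of \cite{DEHG} for these two cases, so here you are, if anything, more self-contained. The genuine gap is in (b), (e), (f), and it stems from a false premise: you assert that ``the value of $s_3$ for a geometry of the prescribed type in $\X_Y$ is fixed by the recurrence underlying Lemma~1 and by Theorems~4.3 and~5.2 of \cite{DEHG}.'' It is not. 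Conditions (1)--(3) fix only $\alpha'$ and $\alpha$ (and hence $s_2$) and impose \emph{squareness} conditions on the $s_i$ for $i \geq 3$; Lemma~1 is an inequality and Theorems~4.3, 5.2 are classification statements, and none of them determines $s_3$ numerically. Consequently there are not two competing values of $\alpha^\ast$ to compare. The information flows the other way: the type of $\X_Z$ determines $\ha = \alpha(\X_Z)$, hence $t_2 = 1+(\ha+t_1)(t_1-1)$, hence $s_3 = 1+(s_1-1)t_2$ as an \emph{explicit polynomial} in $s_1$; the type of $\X_Y$ then contributes only the requirement that $s_3$ (case (b)), or $s_3/s_1$ (cases (e), (f)), be a perfect square. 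Each case thus reduces to showing a specific sextic in $s_1$ --- or, in (b), after setting $t=\sqrt{s_1}$, a degree-12 polynomial, with two subcases for $\alpha = s_1(\sqrt{s_1}\pm1)^2$ --- never takes square values in the relevant range. This is the real content of the proof: the paper writes each such $f$ as $g^2 - h$ with $\deg h \leq \deg g - 2$ and derives a factorisation of the form $(2g(t)-2a)(2g(t)+2a) = 4h(t)$, which is insoluble in integers for $t \geq 2$. Your proposal replaces all of this with the phrase ``gives a numerical contradiction,'' which is precisely the assertion to be proved, not an argument, and the comparison mechanism you propose in its place does not exist.

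Your prediction for (c) is also wrong in an instructive way. You claim the contradiction ``cannot come from a single non-square value but must compare the two families of conditions across all $j$.'' In fact the proof uses only the first value --- necessarily so, since $\X_Y$ is assumed to have dimension at least $3$, so flats beyond $s_3$ need not exist and a comparison ``across all $j$'' is unavailable. Concretely: type (2) in $\X_Z$ gives $\ha = \hs_1(\hs_1-1)$ with $\hs_1 = s_1^2$, whence $\hs_2 = 1+\hs_1^2(\hs_1-1) = s_1^6 - s_1^4 + 1$; type (2) in $\X_Y$ forces $(s_1-1)(s_3-1) = (s_1-1)^2\hs_2$, hence $\hs_2$ itself, to be a square; and $a^2 = t^6-t^4+1$ yields $(2t^3-t+2a)(2t^3-t-2a) = t^2-4$, impossible for $t \geq 3$. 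The hypothesis $s_1 \geq 3$ enters not because your two families ``degenerate'' at $s_1=2$, but because the single Diophantine equation genuinely has a solution there: $2^6 - 2^4 + 1 = 49 = 7^2$. In summary, your localisation reductions and cases (a), (d) are correct, but the heart of the proposition --- identifying the four explicit polynomials and proving them non-square by the $g^2-h$ factorisation device --- is entirely missing from your proposal.
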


\begin{proof} We use the notation of the corresponding Proposition in Section 6 of \cite{DEHG}. Conditions (a) and (d) were proved to be impossible in that Proposition.

\textit{(c) is impossible if $s_1\geq 3$:\/} Here $\hs_1= s_1^2$ and $\ha=s_1^2-\hs_1$ and we require that $(s_1-1)(s_3-1)$ be  a square. As $\hs_2 = (s_3-1)/(s_1-1)$ we require that $\hs_2$ be a square. Now,
\[\hs_2 = 1+(\ha+\hs_1)(\hs_1-1) = 1+\hs_1^2(\hs_1-1)=s_1^6-s_1^4+1.\]
Suppose the polynomial $f(x) = x^6-x^4+1$ takes the square integer value $a^2$ at some $t \in \N$. Then 
\[a^2 = t^6-t^4+1 = (t^3-\frac{1}{2}t)^2-\frac{t^2}{4}+1\]
so
\[(2t^3-t+2a)(2t^3-t-2a)= t^2-4.\]
If $t > 2$ then $t^2-4 \neq 0$ and $2t^3-t > t^2-4$. So the above equation has no solution in integers with $t\geq 3$. This proves that (c) is impossible in $s_1 \geq 3$.

\textit{(e) is impossible: \/} Here, $\hs_1 = s_1^2+s_1+1$ and $\ha = \hs_1^2-\hs_1$ and we require that $s_3/s_1$ be a square. Now,
\[\frac{s_3-1}{s_1-1} = \hs_2 = 1+(\ha+\hs_1)(\hs_1-1) = 1+(s_1^2+s_1+1)^2(s_1^2+s_1)\]
so
\[s_3/s_1 = 1+ (s_1^2+s_1+1)^2(s_1-1)^2 = s_1^6+2s_1^5+2s_1^4-2s_1^2-2s_1.\]

Suppose that the polynomial $f(x) = x^6+2x^5+2x^4-2x^2-2x$ takes the square integer value $a^2$ at some $t \in \N$. One calculates that $f = g^2-h$, where
\[ g(x) = x^3+x^2+\frac{x}{2}-\frac{1}{2} \mbox{ and } h(x) = \frac{5}{4}x^2+\frac{3}{2}x+\frac{1}{4}.\]
So
\[(2t^3+2t^2+t-1-2a)(2t^3+2t^2+t-1+2a) = 5t^2+6t+1.\]
This equation in integers is impossible if $t \geq 2$, and so (e) is impossible.

\textit{(f) is impossible:\/} Here, $\hs_1 = s_1^2+s_1+1$ and $\ha = \hs_1^2+1$ and we require that $s_3/s_1$ be a square. As in the above:
\[\frac{s_3-1}{s_1-1} = \hs_2 = (\hs_1+\ha)(\hs-1)+1 = s_1(s_1+1)(\hs_1^2+\hs_1+1) + 1\]
so
\[s_3/s_1 = 1+(s_1^2-1)(\hs_1^2+\hs_1+1) = s_1^6+2s_1^5+3s_1^4+s_1^3-s_1^2-3s_1-2.\]
Suppose the polynomial $f(x) = x^6+2x^5+3x^4+x^3-x^2-3x-2$ takes the square integer value $a^2$ at some $t \in \N$. One calculates that $f = g^2-h$ where
\[g(x) = x^3+x^2+x-\frac{1}{2} \mbox{ and } h(x) = x^2+2x+\frac{9}{4}.\]
So
\[(2t^3+2t^2+2t-1+2a)(2t^3+2t^2+2t-1-2a) = 4t^2+8t+9.\]
This equation is not soluble in integers if $t \geq 2$, and this proves that (f) is not possible.

\textit{(b) is impossible:\/} Consider first the case where $\alpha = s_1(\sqrt{s_1}+1)^2$. Then $\hs_1 = s_1(1+(\sqrt{s_1}+1)^2)$ and $\ha = \hs_1^2-\hs_1$ and we require that $s_3$ must be a square (and so $s_3/s_1$ must be a square). Then:
\[\frac{s_3-1}{s_1-1} = \hs_2 = 1+(\hs_1+\ha)(\hs-1) = 1+(\hs_1-1)\hs_1^2\]
so
\[s_3/s_1 = 1+(s_1-1)(\hs_1-1)s_1(1+(\sqrt{s_1}+1)^2)^2.\]
Put $t = \sqrt{s_1}$. Then $s_3/s_1$ is equal to
\[t^{12}+6t^{11} +17t^{10}+26t^9+17t^8-12t^7-35t^6-28t^5-4t^4+8t^3+4t^2+1.\]
This is equal to $g^2(t) - h(t)$ where
\[g(t) = t^6+3t^5+4t^4+t^3-\frac{5}{2}t^2-\frac{5}{2}t-\frac{1}{2} \mbox{ and }
h(t) = \frac{5}{4}t^4+\frac{7}{2}t^3+\frac{19}{4}t^2+\frac{5}{2}t - \frac{3}{4}.\]
So, as above, we are required to solve the equation
\begin{multline*}
(2t^6+6t^5+8t^4+2t^3-5t^2-5t-1+2a)(2t^6+6t^5+8t^4+2t^3-5t^2-5t-1-2a) \\
= 5t^4+14t^3+19t^2+10t-3
\end{multline*}
with $a, t \in \N$. It is routine (if unpleasant) to check that there are no such solution to this with $t\geq 2$ (for example, $h(x) = 0$ has no solutions with $x \geq 1$, and for $x \geq 2$ we have $g(x) > h(x)$). Thus case (b) cannot occur with $\alpha = s_1(\sqrt{s_1}+1)^2$.

Suppose now that $\alpha = s_1(\sqrt{s_1}-1)^2$. Then the calculations are exactly as for the above case, except that we substitute $-t$ for $t$. Thus we end up having to solve the equation
\begin{multline*}
(2t^6-6t^5+8t^4-2t^3-5t^2+5t-1+2a)(2t^6-6t^5+8t^4-2t^3-5t^2+5t-1-2a) \\
= 5t^4-14t^3+19t^2-10t-3
\end{multline*}
with $a, t \in \N$. Again it is routine to check that $h(-t) \neq 0$ if $t \geq 2$ and that $g(-t) > h(-t)$ if $t \geq 2$. Thus case (b) cannot occur.

This finishes the proof of the Proposition.
\end{proof}

We can now prove the Theorem stated at the beginning.

\medskip

Let $\X = (X,\cl)$ be a locally finite homogeneous geometry of dimension at least 23 with at least 3 points on a line. Let $p,\ell, \Pi$ be (respectively) a point, line and plane in $\X$ with $p \subseteq \ell \subseteq \Pi$. Suppose $\X$ is not a truncation of a projective or affine geometry over a finite field. By Corollary \ref{Cor4} and Lemma 2.1.1 of \cite{DEHG},  one of conditions (1), (2) or (3) of Corollary \ref{Cor4} holds in $\X$, $\X_p$, $\X_{\ell}$ and $\X_\Pi$ (as each has dimension at least 20). We deduce a contradiction using Proposition \ref{Prop5}.

Proposition \ref{Prop5} (parts (d), (e), (f))  implies that condition (3) cannot hold in any of $\X, \X_p$ or $\X_\ell$. If (1) holds in $\X$, then (a) and (b) imply that (3) holds in $\X_p$, but this is impossible, by the above. So (2) holds in $\X$ and therefore (1) holds in $\X_p$ (by (c) and the above). But now (a) and (b) imply that (3) must hold in $\X_\ell$ and we already know that this is not possible, so we have reached a contradiction.

This proves the Theorem.

\end{document}